\documentclass[11pt,a4paper]{amsart}
\usepackage{amssymb,amsmath,amsthm}

\theoremstyle{plain}
\newtheorem{definition}{Definition}[section]
\newtheorem{theorem}{Theorem}
\newtheorem{example}[definition]{Example}
\newtheorem{lemma}[definition]{Lemma}

\newtheorem{remark}[definition]{Remark}

\newcommand{\N}{\mathbb{N}}
\newcommand{\Q}{\mathbb{Q}}
\newcommand{\R}{\mathbb{R}}

\newcommand{\eps}{\varepsilon}
\def\step#1#2{\par\noindent{\underline{\it Step~#1.}}\emph{ #2}\\}
\renewcommand{\phi}{\varphi}
\renewcommand{\rho}{\varrho}

\title{Duality for rectified Cost Functions}
\author{Mathias Beiglb\"ock, Aldo Pratelli}

\begin{document}

\begin{abstract}
It is well-known that duality in the Monge-Kantorovich transport problem holds true  provided that the cost function $c:X\times Y\to [0,\infty]$ is lower semi-continuous or finitely valued, but it may fail otherwise. We present a suitable notion of \emph{rectificaton} $c_r$ of the cost $c$, so that the Monge-Kantorovich duality holds true replacing $c$ by $c_r$. In particular, passing from $c$ to $c_r$ only changes the value of the primal Monge-Kantorovich problem. Finally, the rectified function $c_r $ is lower semi-continuous as soon as $X$ and $Y$ are endowed with proper topologies, thus emphasizing the role of lower semi-continuity in the duality-theory of optimal transport.
\end{abstract}

\maketitle

\section{Introduction}
\subsection{Description of the main question}

We consider the \emph{Monge-Kan\-to\-ro\-vich transport problem} for
Borel probability measures  $\mu,\nu$ on Polish 
spaces $X,Y$. Standard references for the theory of optimal transportation are~\cite{Vill03,Vill09}.

The set $\Pi(\mu,\nu)$ consists of all 
\emph{transport plans}, that is,  Borel probability measures on
$X\times Y$ which have $X$-marginal $\mu$ and $Y$-marginal $\nu$.
The \emph{transport cost} associated to a \emph{cost function} $c:X\times Y\to [0,\infty]$ and a transport plan $\pi$
is given by
\begin{equation}\label{CostFunctional}
\langle c,\pi\rangle =\iint_{X\times Y}c(x,y)\,d\pi(x,y).
\end{equation}
The (primal)
Monge-Kantorovich problem is then to determine the value
\begin{equation}\label{G1}
P_c:= \inf\{ \langle c, \pi\rangle:\pi\in \Pi(\mu,\nu)\}\, .
\end{equation}
and to identify a primal optimizer $\hat{\pi} \in \Pi(\mu,\nu)$.

A natural condition which guarantees the existence of a primal optimizer is that the cost function $c$ is lower semi-continuous. (See for instance \cite[Theorem 4.1]{Vill09}.)

To formulate the dual problem, we let
\[
\phi\oplus \psi\,(x,y):= \phi(x)+\psi(y)
\]
for functions $\phi, \psi$ on $X$ (resp.\ $Y$). 
The dual Monge-Kantorovich problem then consists in determining
\begin{equation}\label{SimpleJ}  
D_c:=\sup\left\{\int\varphi\,d\mu+\int\psi\,d\nu:  \varphi\in L^1_\mu (Y), \psi\in L^1_\nu(Y), \varphi\oplus\psi\leq c \right\}. 
\end{equation}
Given two functions $\phi,\psi$ which are integrable with respect to $\mu$ and $\nu$ respectively, and which satisfy $\phi\oplus\psi\leq c$, and given a transport plan $\pi\in \Pi(\mu,\nu)$ we clearly have
\[
\iint c\,d\pi\geq \iint \phi\oplus \psi\, d\pi= \int \phi \,d\mu+\int \psi\, d\nu\,,
\]
hence it follows that $P_c\geq D_c$. The question if there actually is \emph{equality}, i.e.\ whether Monge-Kantorovich duality  $P_c=D_c$ holds true, has been intensively studied in the years by many authors, see for instance~\cite{Kant42,KaRu58,Dudl76,Dudl02,deAc82,GaRu81,Fern81,Szul82,Mika06,MiTh06}, and see also the bibliographical notes in \cite[p86, 87]{Vill09}. In particular, it is known that $P_c=D_c$ provided that the cost function $c$ is \emph{lower semi-continuous} (cf.\ \cite[Theorem 2.6]{Kell84} or \cite[Theorems 5.10]{Vill09} for a modern source), or merely measurable but \emph{bounded} (\cite[Corollary 2.16]{Kell84}) or at least \emph{$\mu\otimes \nu$-a.s. finitely valued} (\cite[Theorem 1]{BeSc08}). However, the duality does not hold in complete generality as simple examples show.

\begin{example}\label{ZeroOneInfty}
Let $X=Y=[0,1]$ and let $\mu=\nu$ be the Lebesgue measure. Define $c$ on $X\times Y$ to be $0$ below the diagonal, $1$ on the diagonal and $\infty$ else, i.e.,\
\begin{align*}
c(x,y)=\left\{
\begin{array}{cl}
0,&\mbox{ for }  0\le y<x\le 1,\\
1,&\mbox{ for }  0\le x=y\le 1,\\
\infty,&\mbox{ for }  0\le x<y\le 1.
\end{array}\right.
\end{align*}
The only finite transport plan is concentrated on the diagonal, hence $P_c=1$. On the other hand, if $\phi:X\to [-\infty,\infty), \psi:Y\to [-\infty, \infty)$ satisfy $\phi\oplus\psi\leq c$, one readily verifies that $\varphi(x)+\psi(x)>0$ can hold true for at most countably many $x\in[0,1]$. Hence $D_c=0$
so that there is a duality gap.
\end{example}

Let us discuss the example above a little bit. Strictly speaking, one should simply say that it presents a situation where the duality does not hold true. But on the other hand, one would like to say that in fact the duality should hold true, and it fails only because the cost function $c$ takes the ``wrong'' value on the diagonal, while the ``correct'' cost function should be
\begin{equation}\label{ExampleCR}
c_r(x,y)=\left\{
\begin{array}{cl}
0&\mbox{ for }  0\le y\leq x\le 1,\\
\infty&\mbox{ for }  0\le x<y\le 1.
\end{array}\right.
\end{equation}
In fact, in some sense, around the points in the diagonal there are ``many'' points where $c=0$, hence it makes no sense to have $c=1$ in the diagonal. Notice that with the cost function $c_r$ duality holds, and in particular
\begin{equation}\label{correctduality}
P_{c_r} = D_{c_r} = D_c\,.
\end{equation}
Basically, we are saying that in the above example the correct value of both the primal and the dual problem ``should be'' the same, namely $0$, and it is not so only because the cost function $c$ has been defined in a slightly meaningless way. In particular, the fact that $D_{c_r}=D_c$ is saying that the dual problem is less sensitive to the ``mistakes'' in the definition of $c$, while the primal problem is more sensitive and indeed $P_c > P_{c_r}$.\par\medskip
The aim of the present paper is to show that the situation is always the one described by means of the above simple example. More precisely, we will show that for any transport problem it is possible to define a meaningful \emph{rectified} cost function $c_r\leq c$, and~(\ref{correctduality}) always holds true. Roughly speaking, this means that duality in the Monge-Kantorovich problem \emph{always} holds true, as soon as one considers the ``correct'' definitions of the cost functions $c$. Moreover, an ``incorrect'' definition may only affect the value of the primal problem, and can be corrected by passing to a suitable rectified cost function $c_r$.\par\bigskip

Let us now describe another important feature of our rectification procedure. Consider a simple variant of Example~\ref{ZeroOneInfty}, where the value $+\infty$ in the definition of $c$ is replaced by some number $1<M \in \R$. In this case the cost is finite, then according to the classical results we know that the duality holds. However, the transport problem has now another drawback, namely that there are no optimal transport plans. In fact, the infimum of the costs of the transport plans is now $0$, but every transport plan has a strictly positive cost. In particular, every optimizing sequence of transport plans converges to the plan concentrated on the diagonal, which has cost $1$. Clearly, also this bad behaviour disappears if one passes to the rectified cost function $c_r$, which has value $0$ in the diagonal and coincides with $c$ outside.\par\smallskip
We will show that also this pleasant feature of the rectification process holds in general, that is, the transport problem with the rectified cost $c_r$ always admits optimal transport plans. We can say something even stronger, namely, that for any sequence of plans $\pi_n$ weakly* converging to $\pi$, the liminf inequality for the costs holds, that is,
\begin{equation}\label{liminfinequality}
\pi_n \rightharpoonup \pi \qquad \Longrightarrow  \qquad \langle c_r,\pi\rangle \leq \liminf_{n\to \infty} \langle c_r,\pi_n\rangle\,.
\end{equation}\par\bigskip

Before concluding this introductory description, it is important to underline here two things. First of all, one is easily lead to guess that the correct rectification $c_r$ is simply the lower semi-continuous envelope of $c$. In fact, $c_r$ coincides with the l.s.c. envelope of $c$ in the two examples that we presented above, and moreover for a l.s.c. function the property~(\ref{liminfinequality}) is clearly always true. However, it is also easy to realize that the l.s.c. of $c$ \emph{does not} work as we want. To see this, it is enough to consider the situation when $X=Y=[0,1]$, $\mu=\nu$ is the Lebesgue measure, and
\[
c(x,y) = \left\{\begin{array}{ll} 0 &\hbox{if $(x, y)\in \Q\times\Q$}\,,\\ 1 &\hbox{otherwise}\,.\end{array}\right.
\]
In this case, the value of the cost function is almost surely $1$, so the problem is perfectly equivalent to the trivial problem with $c\equiv 1$, hence the duality already holds, the minimum is already attained, and there is no need to change anything. But on the other hand, the lower semi-continuous envelope of $c$ is costantly $0$. Looking at this problem, one easily understands what is wrong with the l.s.c. envelope. Roughly speaking, one needs to have $c_r(x,y)<c(x,y)$ if there are ``many'' points around $(x,y)$ with a low value of the cost function, while the lower semi-continuous envelope goes down even if there are only ``few'', but infinitely close, such points.\par

The second thing that we want to underline is, whether or not the rectification $c_r$ of $c$ depends on the measures $\mu$ and $\nu$. On one hand, it seems quite reasonable, and it would be of course much better, if it is not the case and $c_r$ depends only on $c$. But on the other hand, it is also easy to realize that this is not possible in general. In fact, if for instance $\mu$ and $\nu$ are concentrated on two points $\bar x\in X$ and $\bar y \in Y$, then the value of $c$ out of $(\bar x,\bar y)$ does not play any role and it cannot affect the definition of $c_r$. More precisely, we can observe that the fact whether there are ``many'' of ``few'' points around $(x,y)\in X\times Y$ of course depends on the measures $\mu$ and $\nu$. In fact, we will show (see Remark~\ref{dependenceonmunu}) that the rectification $c_r$ of $c$ only depends on the class of negligible sets with respect to $\mu$ and $\nu$, which is the best one could hope in view of the above considerations.

\subsection{Formal statement of our result}

In this section, we can give the formal definition of the rectification $c_r$ of $c$ and the correct statement of our main result. First of all, we need to introduce the following notion.
\begin{definition}\label{L-negligibility}
A set $A\subseteq X\times Y$ is called \emph{$L-$negligible} if there exist two sets $M\subseteq X$ and $N \subseteq Y$ with $\mu(M)=\nu(N)=0$ such that
\[
A\subseteq (M\times Y) \cup (X\times N) \,.
\]
Accordingly, if a property holds on the complement of an $L-$negligible set, than we sey that it holds \emph{$L-$almost surely}.
\end{definition}
It is trivial but fundamental to observe that the transport problem is not affected if the cost function is changed on an $L-$negligible set.\par

We can now give our definition of the rectified cost function.

\begin{definition}\label{RegDef}
Let $c:X\times Y\to [0,\infty]$ be measurable. A function $c_r:X\times Y\to [0,+\infty]$ is said to be the \emph{rectification of $c$} if the following holds:
\begin{enumerate}
\item[(i)] for all Borel functions $\phi,\psi:[0,1]\to [-\infty,\infty)$ satisfying $\phi\oplus \psi \leq c$ we have $\phi\oplus \psi \leq c_r$ $L-$almost surely;
\item[(ii)] $c_r$ is minimal subject to (i), i.e.\ if $d$ is another function satisfying (i) then $L-$almost surely $ c_r\leq d$.   
\end{enumerate}
\end{definition}
It is clear from (ii) that every cost function has at most one rectification, while the existence is not obvious. We can now state our result.

\begin{theorem}\label{MainTheorem} Take two Polish spaces $X$ and $Y$, two probability measures $\mu$ and $\nu$ on $X$ and $Y$ respectively, and a Borel measurable cost function $c:X\times Y\to [0,\infty]$. Then the following holds.
\begin{enumerate}
\item[({\bf A})] There exists a ($L-$almost surely) unique rectification $c_r$ of $c$. Moreover
\begin{enumerate}
\item[({\bf A1})] one has $L-$almost surely $c_r\leq c$;
\item[({\bf A2})] if $c$ is lower semi-continuous, then $L-$almost surely $c_r= c$;
\item[({\bf A3})] for the transport problem associated to $c_r$ duality holds, in particular
\[
P_{c_r} = D_{c_r} = D_c\,.
\]
\end{enumerate}
\item[({\bf B})] The transport problem associated to $c_r$ admits a solution (i.e., an optimal transport plan). 
Moreover
\begin{enumerate}
\item[({\bf B1})] for any transport plan $\pi$ and for any sequence of transport plans $\pi_n \rightharpoonup \pi$ one has
\[
\iint_{X\times Y}   c_r \, d\pi \leq \liminf_{n\to\infty} \iint_{X\times Y}   c_r \, d\pi_n\,;
\]
\item[({\bf B2})] for any transport plan $\pi$ there is a suitable sequence of measures $\pi_n \rightharpoonup \pi$ so that 
\[
\iint_{X\times Y}   c_r \, d\pi = \lim_{n\to\infty} \iint_{X\times Y}   c \, d\pi_n\,.
\]
\end{enumerate}
\item[({\bf C})] There exist Polish topologies $\tau_X, \tau_Y$ on $X$ resp.\ $Y$ which refine the original topologies, lead to the same Borel sets and are  so that $ c_r$ is lower semi-continuous w.r.t.\ $\tau_X\otimes \tau_Y$. 
\end{enumerate}
\end{theorem}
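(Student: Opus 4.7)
The plan is to realise $c_r$ as a countable supremum of admissible sums $\phi\oplus\psi$, use that family to refine the Polish topologies on $X,Y$ so that $c_r$ becomes lsc --- which is (C) --- and then deduce the remaining statements by combining Kellerer's classical lsc duality theorem with standard lsc-plus-tightness arguments.

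\textbf{Construction of $c_r$, (A1), uniqueness and (A2).} Fix the class $\mathcal{S}:=\{\phi\oplus\psi:\phi,\psi \text{ Borel with values in }[-\infty,\infty),\ \phi\oplus\psi\le c\}$ and define $c_r$ as an essential supremum of $\mathcal{S}$ with respect to the $\sigma$-ideal of $L$-negligible sets. A standard exhaustion argument (the set of finite maxima of elements of $\mathcal{S}$ is directed upward and still bounded above by $c$, while $L$-negligibility forms a $\sigma$-ideal) produces a countable cofinal sequence $\{(\phi_n,\psi_n)\}\subset\mathcal{S}$ with $c_r=\sup_n\phi_n\oplus\psi_n$ $L$-a.s. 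This makes $c_r$ Borel, verifies (i) and (ii) of Definition~\ref{RegDef} and gives uniqueness $L$-a.s. Property (A1) is immediate because $d=c$ is a valid competitor in (i). For (A2), when $c$ is lsc each open level set $\{c>t\}$ is a countable union of open rectangles $U_k\times V_k$ on which $c\ge t$; the pair $(\phi_k,\psi_k)$ with $\phi_k=t$ on $U_k$ and $-\infty$ elsewhere, $\psi_k=0$ on $V_k$ and $-\infty$ elsewhere, is admissible and forces $c_r\ge t$ $L$-a.s.\ on $U_k\times V_k$. Taking the union over $k$ and over rational $t$ yields $c_r\ge c$, and hence $c_r=c$, $L$-a.s.

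\textbf{Refinement (C) and duality (A3).} The cofamily is exactly the input needed for (C). For each $n$ and each $q\in\Q$, the superlevel sets $\{\phi_n>q\}\subseteq X$ and $\{\psi_n>q\}\subseteq Y$ constitute a countable collection of Borel sets; the standard Polish refinement theorem then furnishes finer Polish topologies $\tau_X$, $\tau_Y$, with the same Borel $\sigma$-algebras, in which all these sets are clopen. Each $\phi_n$ (resp.\ $\psi_n$) is consequently $\tau_X$-lsc (resp.\ $\tau_Y$-lsc), and since
\[
\{\phi_n\oplus\psi_n>t\}=\bigcup_{q\in\Q}\{\phi_n>q\}\times\{\psi_n>t-q\},
\]
each $\phi_n\oplus\psi_n$ is lsc on $(X\times Y,\tau_X\otimes\tau_Y)$; as $c_r$ is their countable supremum, it is lsc too, proving (C). For (A3), $D_c=D_{c_r}$ follows directly from Definition~\ref{RegDef}: any integrable pair feasible for either dual problem can, after modification on an $L$-negligible set (without affecting its integrals), be made feasible for the other. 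The equality $P_{c_r}=D_{c_r}$ is then Kellerer's classical lsc duality applied on the Polish space $(X\times Y,\tau_X\otimes\tau_Y)$, the transport problem itself being unchanged by the refinement (same Borel structure, same marginals).

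\textbf{(B) and the main obstacle.} On the refined Polish space, $\Pi(\mu,\nu)$ is weakly$^*$-compact and $\pi\mapsto\langle c_r,\pi\rangle$ is weakly$^*$-lsc, giving existence of an optimizer for $P_{c_r}$. The liminf inequality (B1), however, is stated for weak$^*$ convergence in the \emph{original} topology, which is strictly weaker than in $\tau_X\otimes\tau_Y$, so one cannot invoke the refined lsc directly. I would prove (B1) by returning to the cofamily: truncate and approximate each $\phi_n$, $\psi_n$ from below by original-topology continuous functions via Lusin's theorem on small $\mu$- and $\nu$-null sets (which are $L$-negligible, so $c_r$ is unaffected $L$-a.s.), and then pass to the limit. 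Property (B2) is dually constructed by choosing, for each $\pi$ and each $n$, a perturbation $\pi_n\rightharpoonup\pi$ whose mass sits in $\{\phi_n\oplus\psi_n>c_r-1/n\}\cap\{c<c_r+1/n\}$, forcing $\langle c,\pi_n\rangle\to\langle c_r,\pi\rangle$. The main obstacle I anticipate is the combination of these measure-theoretic subtleties: realising the essential supremum in the $L$-null $\sigma$-ideal rigorously (Step 1), and reconciling the two different notions of weak$^*$ convergence at play in (B1).
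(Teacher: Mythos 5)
Your overall architecture is the same as the paper's --- realize $c_r$ as a countable supremum $\sup_n\phi_n\oplus\psi_n$, refine the topologies via Kechris to make $c_r$ lsc, then deduce duality and existence of minimizers. The treatment of (A2), (C) and (A3) follows the paper closely and is correct. However, the foundational step has a genuine gap: you claim that a ``standard exhaustion argument'' produces the countable cofinal family because $L$-negligible sets form a $\sigma$-ideal. But the exhaustion argument for essential suprema requires a $\sigma$-finite reference \emph{measure} to quantify how much each function gains; a $\sigma$-ideal alone is not enough. In fact, the $L$-negligible ideal is not the null ideal of any $\sigma$-finite measure on $X\times Y$: taking $\mu=\nu=$ Lebesgue on $[0,1]$, the shifted diagonals $\Delta_t=\{(x,x+t \bmod 1)\}$, $t\in[0,1)$, are uncountably many pairwise disjoint non-$L$-negligible Borel sets, and no $\sigma$-finite measure can assign each positive mass. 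So the existence of the rectification, which you dismiss as routine, is really the main technical content. The paper (Lemma~\ref{MainLemma}, Step~III) constructs the countable family explicitly by choosing a countable dense set of density pairs $(f_n,g_n)$ with $\int f_n\,d\mu=\int g_n\,d\nu$ and for each picking a Kellerer-optimal dual pair $(\phi_n,\psi_n)$; the verification that this supremum satisfies property~(i) is a nontrivial estimate relying on the bounded-cost duality theorem.

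Your treatment of (B1) and (B2) is also too optimistic. For (B1), the Lusin approximation you suggest does not straightforwardly produce continuous $\tilde\phi_n\le\phi_n$ with $\tilde\phi_n=\phi_n$ off a small set: a continuous function below $\phi_n$ everywhere is forced by density to lie below the pointwise $\liminf$ of $\phi_n$, which can be much smaller (e.g.\ $\phi_n=-I_\Q$). The paper instead proves a clean topological lemma (Lemma~\ref{newmathias}): a Hausdorff topology and any finer one agree on each set that is compact for the finer one; combined with Prohorov compactness of $\Pi(\mu,\nu)$ this shows the two weak$^*$ topologies coincide on $\Pi(\mu,\nu)$, and the lsc argument goes through directly. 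For (B2), the perturbation you describe (mass concentrated in $\{\phi_n\oplus\psi_n>c_r-1/n\}\cap\{c<c_r+1/n\}$) is not justified --- there is no reason the given $\pi$ can be approximated weakly by plans sitting in that set. The paper's argument is substantially more involved: it tiles $[0,1]^2$ by $n^2$ small squares, restricts $\pi$ to each square, applies the partial-transport duality of \cite{BeLS09a} on each, and then glues the resulting sub-probability plans, using the $n^{-3}$ error budget to control the total loss. You correctly flagged both of these as the sticking points, but the gap is real: both (B1) and (B2) as you sketch them would not compile into proofs.
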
 

\begin{remark}\label{BLSDuality}
We underline that another way of ``solving'' the situations where the duality does not hold has been given in~\cite{BeLS09a}.
For $\eps>0$, consider the $1-\eps$ partial transportation problem
\[
P^\eps_c:= \inf \left\{\iint c\,d\pi:P_X\pi\leq \mu, P_Y\leq \nu, \|\pi\|\geq 1-\eps\right\}\,.
\]
Then~\cite[Theorem 1.2]{BeLS09a} asserts that
\[
D_c= P^{\textrm{relaxed}}_c:= \lim_{\eps\downarrow 0}P_c^\eps\,.
\]
\end{remark}

\section{Proof of the main result}

In this section, we prove the main Theorem and we add some remarks and examples. We start with one of the main ingredients of the proof, namely, to show the existence of a rectification $c_r$ corresponding to the cost function $c$. In fact, we can show something more precise.

\begin{lemma}\label{MainLemma}
There exists a unique rectification $c_r: X\times Y\to [0,\infty]$ of $c$. Moreover, there exist two sequences of measurable and bounded functions $\phi_n:X\to \R$ and $\psi_n:Y\to \R$ such that $\phi_n\oplus \psi_n \leq c$ for all $n$, and
\[
c_r=\sup_{n\geq 1} \phi_n\oplus \psi_n\,.
\]
\end{lemma}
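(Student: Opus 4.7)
The plan is to construct $c_r$ by a maximality argument within the class of countable suprema of bounded additive pairs $\phi\oplus\psi\leq c$, and then verify the two defining properties of a rectification. Uniqueness is immediate from property~(ii) plus the fact that countable unions of $L$-negligible sets remain $L$-negligible, so the substance lies in existence together with the verification of~(i).

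Set $\mathcal{F}:=\{\phi\oplus\psi:\phi,\psi\ \text{bounded Borel},\ \phi\oplus\psi\leq c \ \text{pointwise}\}$, and let $\mathcal{F}^\sigma$ denote pointwise countable suprema of elements of $\mathcal{F}$. A diagonal argument shows $\mathcal{F}^\sigma$ is itself closed under countable suprema. On $\mathcal{F}^\sigma$ introduce the bounded monotone functional $J(F):=\iint\arctan(F)\,d(\mu\otimes\nu)$, take a maximizing sequence $F_k$, and set $c_r:=\sup_k F_k\in\mathcal{F}^\sigma$. By monotone convergence $J(c_r)=\sup_k J(F_k)$, and $c_r$ is by construction of the form $\sup_n\phi_n\oplus\psi_n$ with bounded $\phi_n,\psi_n$ satisfying $\phi_n\oplus\psi_n\leq c$. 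Property~(ii) then follows at once: if $d$ satisfies~(i), each $\phi_n\oplus\psi_n\leq d$ $L$-a.s., and countable unions of $L$-negligible sets are $L$-negligible, so $c_r\leq d$ $L$-a.s.

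For~(i), one first reduces to bounded $(\phi,\psi)$ via the truncation $\tilde\phi_k:=(\phi\vee(-k))\wedge k$ and $\tilde\psi_k$ analogously: a short case analysis using $c\geq 0$ shows $\tilde\phi_k\oplus\tilde\psi_k\leq c$ pointwise, and $\sup_k\tilde\phi_k\oplus\tilde\psi_k\geq\phi\oplus\psi$ pointwise (where $\phi\oplus\psi$ is finite the truncations eventually equal the original pair, and where either coordinate is $-\infty$ the left-hand side is automatically dominated). For bounded $(\phi,\psi)\in\mathcal{F}$, maximality of $J$ combined with $\max(\phi\oplus\psi,c_r)\in\mathcal{F}^\sigma$ yields $\phi\oplus\psi\leq c_r$ $\mu\otimes\nu$-almost surely, since otherwise $\arctan$ would detect the strict increase.

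The main obstacle is the upgrade from $\mu\otimes\nu$-a.s.\ to $L$-a.s.\ in~(i): the two notions genuinely differ, as already shown by the diagonal in $[0,1]^2$, so no appeal to essential-supremum theory alone closes the gap. To bridge it I would invoke the Kellerer--Hoffmann-J\o rgensen representation of $L$-negligible Borel sets as those with $\pi(S)=0$ for every $\pi\in\Pi(\mu,\nu)$, and verify that the exceptional set $S:=\{\phi\oplus\psi>c_r\}$ satisfies $\pi(S)=0$ for every coupling. This step must exploit the additive structure of $\phi\oplus\psi$ together with the pointwise constraint $\phi\oplus\psi\leq c$ to restrict how $S$ can distribute itself over the product space, and is where I expect the argument to require the most delicate bookkeeping.
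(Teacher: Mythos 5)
Your construction is a genuinely different route, and it correctly settles the easy parts: uniqueness from (ii) plus closure of $L$-negligible sets under countable unions; the representation of $c_r$ as a countable sup of bounded additive pairs; the reduction of test pairs $(\phi,\psi)$ to bounded ones via truncation (this mirrors the paper's Step~I, though the paper truncates $c$ itself whereas your $\arctan$ handles unbounded $c$ without a separate reduction); and $\mu\otimes\nu$-a.s.\ validity of~(i) from the lattice property $\max(\phi\oplus\psi,c_r)\in\mathcal{F}^\sigma$ together with maximality of $J$.

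The gap, however, is exactly the step you flag, and it is not ``delicate bookkeeping'' on top of your construction --- it is where your construction itself breaks down. The functional $J(F)=\iint\arctan(F)\,d(\mu\otimes\nu)$ only interrogates $F$ through the product measure. Consequently $J$ cannot distinguish between two elements of $\mathcal{F}^\sigma$ that agree $\mu\otimes\nu$-a.e.\ but differ on a $\mu\otimes\nu$-null, non-$L$-negligible set (the diagonal being the canonical example you yourself cite). Your $c_r$ is \emph{some} $J$-maximizer obtained from a maximizing sequence, and there is no mechanism in the construction that forces it to be the ``right'' one among these. Passing to $\max(\phi\oplus\psi,c_r)$ and comparing $J$-values only yields $\mu\otimes\nu$-a.s.\ domination, by the strict monotonicity of $\arctan$; the inequality could fail on exactly such a $\mu\otimes\nu$-null exceptional set that some coupling $\pi\in\Pi(\mu,\nu)$ charges. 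Invoking Kellerer's characterization of $L$-negligibility then tells you what you need to prove --- $\pi(\{\phi\oplus\psi>c_r\})=0$ for \emph{every} coupling --- but supplies no handle on your particular $c_r$ to prove it, because nothing in the $J$-maximization saw any coupling other than the product.

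This is precisely why the paper does not construct $c_r$ abstractly. It picks a countable family $(f_n,g_n)$ that is $L^1$-dense in $V=\{(f,g):0\le f,g\le 1,\ \int f\,d\mu=\int g\,d\nu\}$ and takes each $(\phi_n,\psi_n)$ to be a \emph{dual optimizer} for the bounded-cost transport problem with marginals $f_n\mu,\ g_n\nu$ (Kellerer's duality theorem for bounded costs). That density makes the family rich enough to approximate the restricted marginals of \emph{any} coupling $\pi_0$ restricted to a set where~(i) allegedly fails, and the dual optimality then yields a quantitative contradiction (the chain of estimates in Step~III). In short, the paper's construction is tailored so that the verification of~(i) against all couplings is possible; your construction optimizes against a single measure and therefore cannot see the violations it must exclude. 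To repair your argument you would essentially have to replace the $J$-maximizing sequence by something encoding the whole simplex of couplings --- which is what the density of $(f_n,g_n)$ in $V$ accomplishes --- at which point you have reconstructed the paper's proof.
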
 
In the proof of this result, we will use the following characterization of $L$-negligible sets.
\begin{lemma}\label{KellLemma}
A Borel set $A\subseteq X\times Y$ is $L-$negligible if and only $\pi(A)=0$ for every transport plan $\pi\in\Pi(\mu, \nu)$. 
\end{lemma}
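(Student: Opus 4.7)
The forward direction is immediate: if $A\subseteq(M\times Y)\cup(X\times N)$ with $\mu(M)=\nu(N)=0$, then every $\pi\in\Pi(\mu,\nu)$ satisfies $\pi(A)\le \pi(M\times Y)+\pi(X\times N)=\mu(M)+\nu(N)=0$. For the converse I plan to import Kantorovich duality for the bounded Borel cost $\mathbf{1}_A$, a classical result cited in the introduction (\cite[Corollary 2.16]{Kell84}). In its maximization form---obtained from the $\inf$-$\sup$ duality of the paper applied to $-\mathbf{1}_A$ (bounded, hence covered by the duality for bounded costs) followed by the sign flip $\phi\mapsto -\phi$, $\psi\mapsto -\psi$---this reads
\[
\sup_{\pi\in\Pi(\mu,\nu)}\pi(A) \,=\, \inf\Bigl\{\int\phi\,d\mu+\int\psi\,d\nu : \phi\oplus\psi\ge \mathbf{1}_A,\ \phi,\psi\text{ bounded Borel}\Bigr\}.
\]
Under the hypothesis the right-hand side vanishes, so for every $n$ I obtain bounded Borel $\phi_n,\psi_n$ with $\phi_n\oplus\psi_n\ge\mathbf{1}_A$ and $\int\phi_n\,d\mu+\int\psi_n\,d\nu<2^{-n}$.

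Next I would normalize to ensure $\phi_n,\psi_n\ge 0$; this is the one subtle step. Shifting by the constant $a_n:=\inf_X\phi_n\in\R$ sends $(\phi_n,\psi_n)$ to $(\phi_n-a_n,\,\psi_n+a_n)$: the pointwise sum and the sum of integrals are preserved (because $\mu,\nu$ are probability measures), and $\phi_n-a_n\ge 0$ by construction. Moreover $\psi_n+a_n\ge 0$, since for any $y$ and any sequence $x_k$ with $\phi_n(x_k)-a_n\to 0$, feasibility $\phi_n\oplus\psi_n\ge \mathbf{1}_A\ge 0$ forces $\psi_n(y)+a_n\ge -(\phi_n(x_k)-a_n)\to 0$. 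Note that the naive positive-part truncation $\phi_n\mapsto\phi_n^+$ would not serve here, as it may enlarge the integral; an additive shift does not.

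With $\phi_n,\psi_n\ge 0$ in hand, I set $M_n:=\{\phi_n\ge 1/2\}$ and $N_n:=\{\psi_n\ge 1/2\}$. Markov's inequality gives $\mu(M_n)+\nu(N_n)\le 2(\int\phi_n+\int\psi_n)<2^{1-n}$, while on $A$ the bound $\phi_n\oplus\psi_n\ge 1$ forces $x\in M_n$ or $y\in N_n$, so $A\subseteq (M_n\times Y)\cup(X\times N_n)$. Setting $M:=\limsup_n M_n$ and $N:=\limsup_n N_n$, Borel--Cantelli yields $\mu(M)=\nu(N)=0$; and for any $(x,y)\in A$ the dichotomy ``$x\in M_n$ or $y\in N_n$'' holds for every $n$, so by pigeonhole at least one of the two conditions is satisfied for infinitely many $n$, placing $(x,y)\in(M\times Y)\cup(X\times N)$. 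Hence $A$ is $L$-negligible. The substantial input is the bounded-cost duality itself; the remainder is a standard reduction, with the only delicate point being the sign normalization via shift rather than truncation.
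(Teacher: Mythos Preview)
Your proof is correct and follows precisely the route the paper indicates: the paper does not spell out the converse direction but cites Kellerer, noting that it is ``a consequence of the Duality Theorem for bounded cost functions'' \cite[Proposition~3.5]{Kell84}; you have supplied exactly those details, applying bounded-cost duality to $\mathbf{1}_A$ and extracting the null sets $M,N$ via Markov and Borel--Cantelli. One cosmetic remark: your nonnegativity argument for $\psi_n+a_n$ can be stated more directly---since $\phi_n\oplus\psi_n\ge 0$ everywhere, $\psi_n(y)\ge -\phi_n(x)$ for all $x$, hence $\psi_n(y)\ge -\inf_x\phi_n(x)=-a_n$---without invoking an approximating sequence.
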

\begin{proof} If $A$ is $L-$negligible, then clearly  $\pi(A)=0$ for every transport plan $\pi\in \Pi(\mu, \nu)$. The other direction is more difficult and was  first established by Kellerer as a  consequence of the Duality Theorem for bounded cost functions \cite[Proposition 3.5]{Kell84}. See also \cite[Appendix A]{BeLS09a} for a more direct proof.
\end{proof}

\begin{proof}[Proof of Lemma \ref{MainLemma}.]
As already noticed, the uniqueness of a rectification is trivial by property~(ii) of Definition~\ref{RegDef}, hence we have only to show the existence. For simplicity, we will divide the proof of the lemma in some steps.
\step{I}{Reduction to the case of a bounded cost function $c$.}
We start the proof by reducing to the case of a bounded function $c$. First of all, for any function $\tau$ and any $n\in \N$ let us set
\[
\tau^{(n)}:=\max\Big(\min(\tau,n),-n\Big)\,.
\]
It is now immediate to notice that, for any two functions $\phi:X\to[-\infty,\infty), \psi:Y\to[-\infty,\infty)$, the sequence
\[
n\mapsto \phi^{(n)}\oplus \psi^{(n)}
\]
is increasing where $\phi\oplus \psi$ is positive, and it is pointwise converging to $\phi\oplus\psi$. Hence, a function $d:X\times Y\to [0,\infty]$ satisfies $L-$almost surely the inequality $\phi\oplus \psi\leq d$ if and only if $\phi^{(n)}\oplus\psi^{(n)}\leq d$ for all $n\in \N$. As a consequence, if a function $d$ satisfies property~(i) of Definition~\ref{RegDef} for all the pairs of functions which are bounded, then it already satisfies~(i) in full generality. We are then ready to show the claim of this step. Indeed, let us assume that the lemma has been already established for all bounded cost functions, and pick a generic cost function $c$. By assumption, for any $n$ we know that $c^{(n)}$ admits a rectification $c^{(n)}_r= \sup_{j\in \N} \phi_{n,j}\oplus \psi_{n,j}$. We claim then that
\[
c_r := \sup_{n,j} \phi_{n,j}\oplus \psi_{n,j} = \sup_n c^{(n)}_r
\]
is a rectification of $c$. Concerning property~(i), for all Borel functions $\phi,\,\psi$ we have
\[\begin{split}
\phi\oplus \psi \leq c &\quad\Longrightarrow\quad \phi^{(n)}\oplus \psi^{(n)} \leq c\quad \forall\, n
\quad\Longrightarrow\quad \phi^{(n)}\oplus \psi^{(n)} \leq c^{(2n)}\quad \forall\, n\\
&\quad\Longrightarrow\quad \phi^{(n)}\oplus \psi^{(n)} \leq c^{(2n)}_r\leq c_r \quad \forall\, n
\quad\Longrightarrow\quad \phi\oplus \psi \leq c_r\,.
\end{split}\]
On the other hand, concerning property~(ii), let $d:X\times Y\to [0,+\infty]$ satisfy~(i), and let $n\in \N$. Since we assume the validity of the lemma for $c^{(n)}$, which is bounded, from the fact that
\[
\phi\oplus \psi \leq c^{(n)} \quad\Longrightarrow\quad \phi\oplus \psi \leq c
\quad\Longrightarrow\quad \phi\oplus \psi \leq d\,,
\]
we immediately deduce that $c^{(n)}_r \leq d$. Hence, clearly $c_r = \sup_n c^{(n)}_r \leq d$.

\step{II}{The bounded case: definition of $c_r$ and property~(ii).}
In view of Step~I, let us now concentrate on the case of a bounded cost function $c$, say $c:X\times Y\to [0,M]$.  Consider the set
\[
V:=\bigg\{(f,g): \,f:X\to [0,1],\, g:Y\to [0,1], \int f\, d\mu=\int g\, d\nu\bigg\}\,, 
\]
and pick a family $\{(f_n,g_n)\}_{n\in\N}\subseteq V$ which is dense in $V$  in the sense that for all $(f,g) \in V$ and  $\eps>0$ there are $f_n, g_n$ satisfying $\|f-f_n\|_1+\|g-g_n\|_1\leq \eps$.   
 For each $n\in\N$, let us take a pair of functions $(\phi_n,\psi_n)$ which are optimal for the dual problem, hence such that $\phi_n\oplus \psi_n\leq c$ and
\begin{equation}\label{NeedBD}
\int \phi_n\, d(f_n\mu)+\int \psi_n\,d(g_n\nu)=\inf\left\{ \iint c\, d\gamma: P_X\gamma=f_n\nu,P_Y\gamma=g_n\mu\right\}.
\end{equation}
This is possible thanks to the known duality theorem for bounded cost functions (\cite[Theorem 2.21]{Kell84}).

For technical reasons it will be  convenient to take also the pair of functions $\phi_0\equiv 0, \psi_0\equiv 0$ into account.

Let us now define $c_r:= \sup_{n\geq 0} \phi_n\oplus\psi_n$: The proof will be obtained by checking that $c_r$ is a rectification of $c$. Let us start with the minimality property~(ii), which is straightforward. Indeed, let $d$ be a function which satisfies~(i). For any $n\geq 0$, then, by construction we have $\phi_n\oplus \psi_n\leq c$, and by~(i) this implies $\phi_n\oplus \psi_n\leq d$. Passing to the supremum, we obtain $c_r = \sup_n \phi_n\oplus \psi_n \leq d$, then the required minimality property~(ii).
\step{III}{The bounded case: proof of~(i).}
In view of the preceding steps, we still only have to check that the function $c_r$ defined above verifies~(i). Striving for a contradiction, we assume that there exist functions $\phi:X\to\R, \psi:Y\to \R, \phi\oplus \psi \leq c$ such that the set $\{\phi\oplus \psi > c_r\}$ is not $L-$negligible. 

Pick, by Lemma \ref{KellLemma}, a transport plan $\pi_0\in \Pi(\mu,\nu)$ so that $\pi_0(\{\phi\oplus \psi > c_r\})>0$.  As 
\[
\{\phi\oplus \psi > c_r\}=\bigcup_{a,b,\delta \in \Q, \delta > 0}
 \{(x,y):\phi(x)>a,\psi(y)>b, a+b>c_r(x,y) + \delta \}\,,
\]
there exist $a,b\in\R,\delta >0$ and a Borel set $\Gamma\subseteq X\times Y$ so that 
\begin{align*}
\pi_0(\Gamma)&>0  \\
a&<  \phi &\mbox{ on } A:=P_X \Gamma,\\  
b&<  \psi &\mbox{ on } B:=P_Y \Gamma,\\
c_r&< a+b-\delta & \mbox{ on }\Gamma. 
\end{align*}
Let now
\begin{align*}
\gamma_0:=\pi_0{\upharpoonright \Gamma}\,, &&
f:=\frac{d(P_X\gamma_0)}{d\mu}\,, &&
g:=\frac{(dP_Y\gamma_0)}{d\nu}\,,
\end{align*}
where the first definition means that for any Borel set $\Delta$ one has
\[
\gamma_0(\Delta) = \pi_0\big( \Gamma\cap \Delta)\,.
\]
Since $(f,g)\in V$, we can pick  $n\geq 1$ so that  $f_n,\, g_n$ satisfy
\begin{equation}\label{DCBound}
\|f-f_n\|_1+\|g-g_n\|_1  < \frac{ \delta \|\gamma_0\|}{2M}\,.
\end{equation}
Recalling now that $c\geq \phi\oplus \psi> a+b$ on $A\times B$, we can estimate
\begin{equation}\label{FirstDC}\begin{split}
\inf_{\gamma\in\Pi(f_n\mu,g_n\nu)} \iint_{X\times Y} c\, d\gamma
&\geq \inf_{\gamma\in\Pi(f_n\mu,g_n\nu)} \iint_{A\times B} c\, d\gamma\\
&\geq (a+b)\bigg(\|\gamma_0\| -\|f-f_n\|_1-\|g-g_n\|_1\bigg)\,.
\end{split}\end{equation}
On the other hand, set
\begin{align*}
\alpha := \frac{d(f_n\mu)}{d(f\mu)}\wedge 1\,, && \beta:= \frac{d(g_n\nu)}{d(g\nu)}\wedge 1\,, &&
\tilde\gamma_0 = \big( \alpha\wedge \beta ) \gamma_0 \leq \gamma_0\,,
\end{align*}
and notice that
\begin{equation}\label{notice1}\begin{split}
\|\gamma_0\| - \| \tilde\gamma_0\| &= \iint 1- \big(\alpha\wedge \beta\big)\, d\gamma_0
\leq \iint 1-\alpha\, d\gamma_0 + \iint 1-\beta\, d\gamma_0\\
&\leq \|f-f_n\|_1+\|g-g_n\|_1\,.
\end{split}\end{equation}
We can then call
\begin{align*}
\tilde f := \frac{d(P_X \tilde\gamma_0)}{d\mu}\,, &&
\tilde g := \frac{d(P_Y \tilde\gamma_0)}{d\nu}\,, &&
f_r := f_n - \tilde f \geq 0\,, &&
g_r := g_n - \tilde g \geq 0\,,
\end{align*}
observe that
\begin{equation}\label{notice2}
\| f_r\mu \| = \| g_r\nu \| \leq \|f-f_n\|_1+\|g-g_n\|_1\,,
\end{equation}
thus getting to evaluate
\[\begin{split}
\int \phi_n \,&d (f_n\mu) + \int \psi_n\,d (g_n \nu)\\
&=\int \phi_n \,d (\tilde f\mu) + \int \psi_n\,d (\tilde g \nu)+\int \phi_n \,d (f_r\mu) + \int \psi_n\,d (g_r \nu)\\
&=\iint \phi_n\oplus\psi_n \,d \tilde\gamma_0+\iint \phi_n\oplus\psi_n \,d \bigg(\frac{(f_r\mu)\otimes (g_r \nu)}{\|f_r\mu \|}\bigg)\\
&\leq \iint c_r \,d \tilde\gamma_0+ M \|f_r\mu\|
\leq \iint c_r \,d\gamma_0 + M \|f_r\mu\|   \\
&\leq \big(a+b-\delta \big) \| \gamma_0 \|+ M \Big(\|f-f_n\|_1+\|g-g_n\|_1\Big)\,,
\end{split}\]
where we have used~(\ref{notice1}), (\ref{notice2}) and the fact that $c_r\geq 0$, which immediately comes from the definition of  $\phi_0$ and $\psi_0$. 
Finally, inserting the last inequality and~(\ref{FirstDC}) into~(\ref{NeedBD}), and recalling that by construction $a+b\leq \sup c \leq M$, we readily obtain
\[
\|f-f_n\|_1+\|g-g_n\|_1 \geq \frac{\delta \| \gamma_0 \|}{2M} \,,
\]
which together with~(\ref{DCBound}) provides the searched contradiction.
\end{proof}

Before coming to the proof of Theorem~\ref{MainTheorem}, we can underline what follows.

\begin{remark}\label{dependenceonmunu}
It is important to notice that the dependence of the rectification $c_r$ on $\mu$ and $\nu$ is in fact only a dependence on the class of the negligible sets on $X$ and $Y$ with respect to $\mu$ and $\nu$. This is obvious from Definitions~\ref{RegDef} and~\ref{L-negligibility}, since everything depends on which sets are $L-$negligible, and in turn this only depends on the $\mu-$ and $\nu-$negligible sets.
\end{remark}

\begin{proof}[Proof of Theorem \ref{MainTheorem}.]
Let us start from Property~({\bf C}). 
Our argument will be based on  \cite[Theorem 13.1]{Kech95}: if $Z$ is a polish space and $B_1, B_2, \ldots$ are Borel sets in $Z$, then there exists a Polish topology $\tau$ on $Z$ so that $\tau$ refines the original topology, $\tau$ generates the same Borel sets as the original topology and all sets $ B_n, n\in\N$ are open in $\tau$.

A useful application is that a Borel function can be viewed as continuous function on a modified space. More precisely, if $f:Z\to\R$ is Borel, then we can apply the above-mentioned result to the sets $B_n:=f^{-1} (U_n), n\in\N$, being $\{U_n\}_{n\in\N}$ a neighborhood basis of $\R$, to obtain a Polish topology $\tau$ on $Z$ which refines the original topological and in which all sets $B_n$ are open. Consequently, $ f$ is a  continuous function on the space $(Z,\tau)$.

By Lemma \ref{MainLemma}, there exist two sequences of measurable functions $\phi_n:X\to \R$ and $\psi_n:Y\to\R$  such that $\sup_{n\geq 1} \phi_n\oplus \psi_n =c_r$. 
Using the just explained argument,  we can find topologies $\tau_X$ and $\tau_Y$ on $X$ resp.\ $Y$ so that, for every $n\in\N$, $\phi_n$ and $\psi_n$  are  continuous functions on $(X,\tau_X)$ resp.\ $(Y,\tau_Y)$. As a consequence, the functions $\phi_n\oplus\psi_n, n\in\N$  are continuous  on $(X\times Y, \tau_X\otimes \tau_Y)$ and hence $c_r=\sup_{n\geq 1} \phi_n\oplus \psi_n$ is l.s.c.\ with respect to $\tau_X\otimes  \tau_Y$, so that Property~({\bf C}) follows.

\medskip

Let us now consider Property~({\bf A}). The existence and uniqueness of a rectification have been already established with Lemma~\ref{MainLemma}. Concerning Property~({\bf A1}), it is clear from the definition of the rectification.\par
Let us then consider Property~({\bf A2}). Pick families of open sets  $\{U_n\}_{n\in\N}$ and $ \{V_m\}_{m\in\N}$ which form  bases of the topologies of $X$ resp.\ $Y$. For $n,m\in\N$, set 
\[
e_{n,m}:= \inf_{(x,y)\in U_n\times V_m} c(x,y)
\] 
and define $\phi_{n,m}:X\to \R$ and $\psi_{n,m}:Y\to \R$ so that
\begin{align*}
\phi_{n,m}\oplus \psi_{n,m} 
&=  e_{n,m} &\mbox{ on } U_n\times V_m\\
\phi_{n,m}\oplus \psi_{n,m}& \leq  0&  \mbox{ otherwise. }
\end{align*}
Since by construction $\phi_{n,m}\oplus \psi_{n,m}\leq c$, by definition we have $\phi_{n,m}\oplus \psi_{n,m}\leq c_r$, and hence also
\[
c_r \geq \sup_{n,m} \phi_{n,m}\oplus \psi_{n,m}\,.
\]
Finally, if $c$ is l.s.c. then the latter supremum coincides with $c$ itself, so $L-$a.s. one has $c_r\geq c$, which together with Property~({\bf A1}) concludes the searched equality.\par
Finally, we consider Property~({\bf A3}). First of all, we can observe that $c$ and $c_r$ have the same dual problem, that is, $D_c=D_{c_r}$. To do so, take two functions $\varphi,\,\psi$, integrable with respect to $\mu$ and $\nu$ respectively, and such that $\phi\oplus \psi\leq c$. By definition, there exist sets $M\subseteq X, N \subseteq Y, \mu(M)=\nu(N)=0$ so that $(\phi\oplus\psi)(x,y)\leq c_r(x,y)$ for all $x\in X\setminus M, y\in Y\setminus N$, hence for $\tilde \phi:=\phi-I_M, \tilde \psi:=\psi-I_N$ we have $\tilde \phi\oplus\tilde\psi\leq c_r$ and $\int \phi\,d\mu=\int\tilde\phi\,d\mu, \int\psi\,d\nu=\int \tilde\psi\,d\nu$. This shows $D_{c_r}\geq D_c$. The other inequality is identical. Indeed, if $\varphi,\,\psi$ are integrable and $\phi\oplus \psi\leq c_r$, by Property~({\bf A1}) there are again two sets $M\subseteq X, N \subseteq Y, \mu(M)=\nu(N)=0$ so that $(\phi\oplus\psi)(x,y)\leq c(x,y)$ for all $x\in X\setminus M, y\in Y\setminus N$, so exactly as before we get $D_c \geq D_{c_r}$, and in particular we have $D_c=D_{c_r}$.\par
Moreover, having already established Property~({\bf C}), the equality $D_{c_r}=P_{c_r}$ comes directly from the standard duality theorem for l.s.c. cost functions (notice that a change of the topology which does not change the Borel sets does not effect neither the primal nor the dual problem).

\medskip 

We are then finally left with Property~({\bf B}). First of all, the existence of an optimal transport plan with respect to $c_r$ is obvious by Property~({\bf C}). Indeed, it is well-known that a transport problem with a l.s.c. cost admits an optimal transport plan, and the optimality of a plan is again not effected, of course, by a change of the topology.\par
Let us now consider Property~({\bf B1}). It is very well-known that, whenever $d$ is a l.s.c. function, and $\gamma_n$ is a sequence of measures weakly* converging to $\gamma$, one has
\[
\int d\, d\gamma \leq \liminf_{n\to\infty} \int d\, d\gamma_n\,.
\]
Therefore, since $c_r$ is l.s.c. with respect to $\tau_X\otimes \tau_Y$, we immediately get
\[
\iint_{X\times Y}   c_r \, d\pi \leq \liminf_{n\to\infty} \iint_{X\times Y}   c_r \, d\pi_n
\]
for any sequence $\{\pi_n\}$ which weakly* converges to $\pi$ with respect to the new topologies. We will conclude as soon as we observe that this latter weak* convergence is equivalent to the original one. We will obtain this equivalence thanks to Lemma~\ref{newmathias} below. Indeed, it is immediate to observe that, since the topologies $\tau_X$ and $\tau_Y$ are finer than the original topologies on $X$ and $Y$, then also the weak* topology on $\Pi(\mu,\nu)$ is finer then the original one, so to apply Lemma~\ref{newmathias} we only need the weak* compactness (in the ``new'' sense) of the set $\Pi(\mu,\nu)$ of the transport plans. And in turn, this compactness can be derived from Prohorov's Theorem as in \cite[p55-57]{Vill09}. 
\par
Finally, we are left with Property~({\bf B2}). Keeping in mind Property~({\bf B1}), it is sufficient to show that for every $\pi\in\Pi(\mu,\nu)$ there exists a sequence $\pi_n\rightharpoonup \pi$ so that
\begin{equation}\label{AimBelow}
\limsup_{n\to\infty} \iint c\, d\pi_n\leq \iint c_r\, d\pi\,.
\end{equation}
We present the proof for the case $X=Y= [0,1]$ and $\mu=\nu=\lambda$, because then the argument is much simpler to read, but at the end it will be clear that the proof of the general case is equivalent, and just more notationally unconfortable.\par
If $\iint_{X\times Y} c_r\,d\pi=\infty$ there is nothing to prove, so assume that $\iint_{X\times Y} c_r\,d\pi<\infty$.
Fix $n\in\N$ and $l,m\in\{1,\ldots,n\}$.
Set 
\[
D_{l,m}^n:=\left(\tfrac{l-1}n,\tfrac ln\right]\times \left(\tfrac{m-1}n,\tfrac mn\right].
\]
Denote by $\mu_{l,m}^n , \nu_{l,m}^n$ the marginals of $\pi\upharpoonright D_{l,m}^n$. 
For $\mu_{l,m}^n-$, resp.\ $ \nu_{l,m}^n-$integrable functions $\phi: \left(\tfrac{l-1}n,\tfrac ln\right]\to \R,\psi:\left(\tfrac{m-1}n,\tfrac mn\right]\to\R$ satisfying $\phi\oplus \psi\leq c$ we have 
\[
\int \phi \, d\mu_{l,m}^n+ \int \psi \, d\nu_{l,m}^n \leq  \iint_{D_{l,m}^n} c_r \,d\pi< \infty\,.
\]
Hence the optimal dual value corresponding to the cost function $c$ and the spaces  $\left(\left(\tfrac{l-1}n,\tfrac ln\right],\mu_{l,m}^n\right)$,  $\left(\left(\tfrac{m-1}n,\tfrac mn\right],\nu_{l,m}^n\right)$ is finite.  
By Remark~\ref{BLSDuality} (resp.\  \cite[Theorem 1.2]{BeLS09a}), there exist $\mu_{l,m}^n-$, resp.\ $ \nu_{l,m}^n-$integrable functions $\phi_{l,m}^n: \left(\tfrac{l-1}n,\tfrac ln\right]\to \R,\psi_{l,m}^n:\left(\tfrac{m-1}n,\tfrac mn\right]\to\R$ and a measure $\pi_{l,m}^n$ on $D_{l,m}^n$ so that
\begin{align}
P_X\pi_{l,m}^n\leq \mu_{l,m}^n\,,\quad P_Y\pi_{l,m}^n\leq \nu_{l,m}^n\,, \quad\|\pi_{l,m}^n\|& \geq \|\mu_{l,m}^n\| -\tfrac1{n^3}\,, \label{SquareApprox} \\
\label{LittleSquareAdmissible}\phi_{l,m}^n\oplus\psi_{l,m}^n& \leq c\,,\\ 
\label{LittleSquareDuality} 
\int \phi_{l,m}^n\, d\mu_{l,m}^n + \int \psi_{l,m}^n\, d\nu_{l,m}^n&\geq \iint c \,d\pi_{l,m}^n-\tfrac1{n^3}\,. 
\end{align}
Define a measure $\pi_n$ on $X\times Y$ by the requiring that $\pi_n\upharpoonright D_{l,m}^n=\pi_{l,m}^n$ for all $l,m\in \{1,\ldots,n\}$. 
It follows from  \eqref{SquareApprox} that
\[
\lim_{n\to\infty} \pi_n(D^{n}_{l,m})=\pi(D^n_{l,m})
\]
for all $l,m,n\in\N, l,m\leq n$. Consequently, $(\pi_n)_{n\geq 1}$ converges weakly* to $\pi$. 
From  \eqref{LittleSquareAdmissible} and \eqref{LittleSquareDuality} we deduce 
\begin{align*}
\begin{split} \label{AlmostAimBelow}
\iint c\,d \pi_n-\tfrac1{n}
 &  = \sum_{l,m\leq n}\Big( \iint c \,d\pi_{l,m}^n-\tfrac1{n^3}\Big)\\
 &  \leq \sum_{l,m\leq n}  \int \phi_{l,m}^n\, d\mu_{l,m}^n + \int \psi_{l,m}^n\, d\nu_{l,m}^n\\
& = \sum_{l,m\leq n}  \iint_{D_{l,m}^n} \phi_{l,m}^n\oplus  \psi_{l,m}^n\, d\pi\\
&\leq  \sum_{l,m\leq n}  \iint_{D_{l,m}^n} c_r\, d\pi = \iint c_r \,d\pi\,.
\end{split}
\end{align*}
Letting $n$ tend to $\infty$ in the last inequality, we obtain  \eqref{AimBelow}. 
\end{proof}

In the above proof, we needed to use the following technical topological result.

\begin{lemma}\label{newmathias}
Let $(Z,\tau)$ be a (Hausdorff-) topological space, and let $\tilde\tau$ be a finer topology. Then $\tau$ and $\tilde\tau$ agree on each subset of $Z$ which is compact with respect to $\tilde\tau$.
\end{lemma}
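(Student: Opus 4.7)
The plan is to invoke the classical topological fact that a continuous bijection from a compact space to a Hausdorff space is automatically a homeomorphism, applied to the identity map on the compact set $K$.

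More concretely, let $K \subseteq Z$ be $\tilde\tau$-compact, and write $\tau_K$ and $\tilde\tau_K$ for the subspace topologies induced on $K$. First I would observe that since $\tilde\tau$ refines $\tau$, the identity map $\mathrm{id}:(K,\tilde\tau_K)\to (K,\tau_K)$ is continuous: the preimage of a $\tau_K$-open set is a $\tau$-open set intersected with $K$, which is in particular $\tilde\tau$-open intersected with $K$. Next, since $(Z,\tau)$ is Hausdorff and Hausdorffness is hereditary, $(K,\tau_K)$ is Hausdorff. By hypothesis $(K,\tilde\tau_K)$ is compact.

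Now I would apply the standard argument: any continuous map from a compact space to a Hausdorff space is closed, because the image of a closed (hence $\tilde\tau_K$-compact) subset of $(K,\tilde\tau_K)$ is $\tau_K$-compact, and compact subsets of a Hausdorff space are closed. Since the identity is moreover a bijection, it is a homeomorphism, meaning $\tau_K=\tilde\tau_K$, which is exactly the assertion.

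There is essentially no real obstacle here; the only point requiring minimal care is to make sure the Hausdorff property is transferred to the subspace (immediate) and that one uses the compactness in $\tilde\tau$, not $\tau$, as the starting hypothesis. The argument does not use anything specific about Polish topologies or the particular refinement $\tau_X\otimes\tau_Y$ constructed earlier, so the lemma is genuinely a general topological statement and can be stated and proved in a couple of lines.
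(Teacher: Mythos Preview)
Your argument is correct and is essentially identical to the paper's: both consider the identity map $(K,\tilde\tau)\to(K,\tau)$, use continuity plus $\tilde\tau$-compactness to send $\tilde\tau$-closed sets to $\tau$-compact sets, and then use the Hausdorff hypothesis to conclude these are $\tau$-closed. The only difference is packaging---you phrase it as the standard ``continuous bijection from compact to Hausdorff is a homeomorphism'' fact, while the paper unwinds that argument directly---but the content is the same.
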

\begin{proof}
Let $K$ be a $\tilde\tau-$compact subset of $Z$, and let $T:(K,\tilde\tau)\to (K,\tau)$ be the identity map. Since $\tilde\tau$ is finer than $\tau$, then $T$ is continuous. On the other hand, let $A\subseteq K$ be $\tilde\tau-$open. Hence, $C=K\setminus A$ is $\tilde\tau-$closed, hence $\tilde\tau-$compact because so is $K$. Being $T$ continuous, and since the continuous images of compact sets are compact, we have that $C$ is also $\tau-$compact, hence $\tau-$closed. In conclusion, the generic $\tilde\tau-$open set $A$ is also $\tau-$open, and this shows that the two topologies agree on $K$.
\end{proof}

\begin{remark}
It is interesting to observe that the rectification $c_r$ of $c$ is almost characterized by Property~({\bf B}) of Theorem~\ref{MainTheorem}.

\medskip

More precisely, it is apparent  from the proof of Property~({\bf B})  that $c_r $ satisfies the following slightly stronger assertion:
\begin{enumerate}
\item[({\bf B'})]
for any measure $\pi$ on $X\times Y$, $P_X\pi\leq \mu,P_Y\pi\leq \nu$ and any sequence $\pi_n \rightharpoonup \pi$, $P_X\pi_n\leq \mu,P_Y\pi_n\leq \nu$ one has
\[
\int_{X\times Y}   c_r \, d\pi \leq \liminf_{n\to\infty} \int_{X\times Y}   c_r \, d\pi_n\leq \liminf_{n\to\infty} \int_{X\times Y}   c \, d\pi_n\,,
\]
and moreover for any such measure $\pi$ there is a sequence $\pi_n \rightharpoonup \pi$ such that the above inequalities are equalities.
\end{enumerate}

Assume now that $\tilde c_r $ is another function satisfying the ({\bf B'}), then $c_r=\tilde c_r $  $L-$almost surely.

To see this observe that ({\bf B'}) implies 
$$\iint c_r\, d\pi=\iint\tilde c_r\, d\pi$$ for every $\pi$ satisfying $P_X\pi_n\leq \mu,P_Y\pi_n\leq \nu$. In turn also $\pi(\{c_r<  \tilde c_r\})=\pi(\{c_r> \tilde c_r\})=0$ for every $\pi\in \Pi(\mu,\nu)$. Our claim then follows from Lemma~\ref{KellLemma}.
\end{remark}

\begin{remark}
In light of Theorem~\ref{MainTheorem}, it is natural to ask whether there is in general a modification of $c_r$ on an $L-$negligible set that is  l.s.c.\ already in the original topology, resp.\ whether there is a nice way to define a rectification-procedure which has this feature. The following simple example shows why this is not the case in general. 

Let $(X,\mu)=(Y,\nu)=([0,1],\lambda)$. Let $(q_n)_{n\geq 1}$ be an enumeration of the rationals in $[0,1]$. Pick $\alpha$ so that 
$$D:= [0,1]\setminus \bigcup_{n\geq 1} (q_n-\alpha/2^n,q_n+\alpha/2^n)$$
has Lebesgue-measure $1/2$.  Set $\phi=I_D,\psi\equiv 0$ and $c\equiv \phi\oplus\psi.$ Then $P_c=D_c=1/2$, but every lower semi-continuous function $g:X\times Y\to [0,\infty]$ which is  $L-$almost surely smaller than $c$ necessarily satisfies $g\leq 0$ $L-$almost surely.
\end{remark}
\def\ocirc#1{\ifmmode\setbox0=\hbox{$#1$}\dimen0=\ht0 \advance\dimen0
  by1pt\rlap{\hbox to\wd0{\hss\raise\dimen0
  \hbox{\hskip.2em$\scriptscriptstyle\circ$}\hss}}#1\else {\accent"17 #1}\fi}

\end{document}